\documentclass[11pt,a4paper]{amsart}
\usepackage{graphicx,multirow,array,amsmath,amssymb,color,enumitem,subfig}

\newtheorem{theorem}{Theorem}
\newtheorem{proposition}[theorem]{Proposition}
\newtheorem{lemma}[theorem]{Lemma}

\newcommand{\adj}{\!\thicksim\!}
\newcommand{\nadj}{\!\nsim\!}

\begin{document}
	
\title{Four triangle-free intrinsically knotted graphs with 22 edges}

\author[H. Kim]{Hyoungjun Kim}
\address{Department of Mathematics Education, Kyungpook National University, Daegu 41566, Korea}
\email{kimhjun@knu.ac.kr}

\author[T.W. Mattman]{Thomas W.~Mattman}
\address{Department of Mathematics and Statistics, California State University, Chico, Chico CA 95929-0525, USA}
\email{TMattman@CSUChico.edu}

\author[S. Oh]{Seungsang Oh}
\address{Department of Mathematics, Korea University, Seoul 02841, Korea}
\email{seungsang@korea.ac.kr}

\dedicatory{Dedicated to the memory of Professor Hwa Jeong Lee.}
	
\thanks{Mathematics Subject Classification 2010: 57M25, 57M27, 05C10}

\begin{abstract}
An intrinsically knotted graph is one for which every spatial embedding contains a nontrivially knotted cycle.
Classifying such graphs is a central problem in spatial graph theory.
It is known that every intrinsically knotted graph has at least 21 edges, and the case of 21 edges has been completely resolved.
For 22 edges, however, the classification remains incomplete.
In particular, exactly eight triangle-free examples with a vertex of degree at least 5 are known, leaving only the case in which all vertices have degree 3 or 4.
In this paper, we introduce a method for detecting intrinsic knottedness based on induced subgraphs obtained by deleting pairs of vertices.
Using this method, we classify all triangle-free intrinsically knotted graphs with 22 edges having eight vertices of degree~4 and four of degree~3.
We prove that there are exactly four: Cousins 43, 105, and 109 in the $E_9\!+\!e$ family and the graph $H_{12}\! +\! e$ in the $H_9\!+\!e$ family.
\end{abstract}

\maketitle

\section{Introduction} \label{sec:intro}

A graph $G$ is {\em intrinsically knotted\/} (IK) if every embedding of $G$ in $\mathbb{R}^3$ contains a nontrivially knotted cycle.
A graph $H$ is a {\em minor\/} of a graph $G$ if $H$ can be 
obtained from $G$ by contracting or deleting
zero or more
edges.
If $G$ is IK and has no proper IK minor, then $G$ is said to be {\em minor minimal intrinsically knotted\/} (MMIK).
By Robertson and Seymour's Graph Minor Theorem~\cite{RS}, there are only finitely many MMIK graphs, but determining the complete set remains an open problem.

A $\nabla Y$ {\em move\/} is an operation on a graph that removes all edges of a 3-cycle $abc$, introduces a new vertex $v$, and adds three edges $va$, $vb$ and $vc$.
The reverse operation is called a $Y \nabla$ {\em move\/}; see Figure~\ref{fig:Y}.
The $\nabla Y$ move preserves intrinsic knottedness.
Sachs~\cite{S} observed this for intrinsic linking, and the proof for intrinsic knotting is analogous.
Thus, in the study of IK graphs, it is natural to focus on triangle-free graphs.

\begin{figure}[ht]
\includegraphics{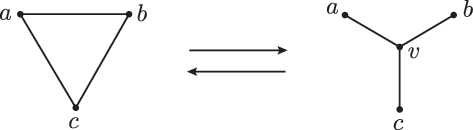}
\caption{$\nabla Y$ and $Y \nabla$ moves}
\label{fig:Y}
\end{figure}

Johnson, Kidwell, and Michael~\cite{JKM} and the second author~\cite{M} showed that every IK graph has at least 21 edges.
Working independently, two groups \cite{BM,LKLO} proved that $K_7$ and the 13 graphs obtained from $K_7$ by $\nabla Y$ moves are the only IK graphs with 21 edges.
Among these, $H_{12}$ and $C_{14}$ are triangle-free.

More generally, all known MMIK graphs and triangle-free IK graphs
with at most 22 edges 
belong to one of four families, namely the $K_{3,3,1,1}$, $E_9\!+\!e$, $H_8\!+\!e$, and $H_9\!+\!e$ families, 
see
Figure~\ref{fig:4family}.
Goldberg et al.~\cite{GMN} studied the $K_{3,3,1,1}$ family, which contains 56 graphs, and the $E_9\!+\!e$ family, which contains 110 graphs.
In~\cite{KM}, we introduced the $H_8\!+\!e$ family, with 125 graphs, and the $H_9\!+\!e$ family, with 5 graphs.
Flapan et al.~\cite{FMMNN} described a computer search showing that there are 92 MMIK graphs with 22 edges, all of which belong to one of these four families.
More recently, we used computational methods to show that there are exactly three MMIK graphs with 23 edges~\cite{KM}.

\begin{figure}[ht] 
\includegraphics{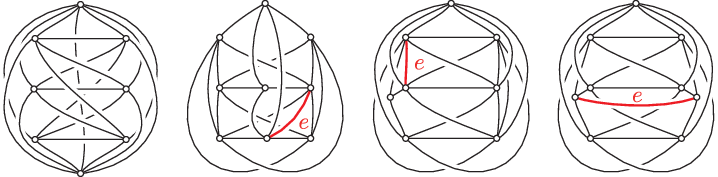}
\caption{$K_{3,3,1,1}$, $E_9\!+\!e$, $H_8\!+\!e$, and $H_9\!+\!e$}
\label{fig:4family}
\end{figure}

In earlier work \cite{KLLMO,KMO2}, we completed the classification of triangle-free IK graphs with 22 edges that have at least one vertex of degree at least~5.
More precisely, \cite{KLLMO} showed that there are exactly three such graphs with at least two degree-5 vertices (Cousins 94 and 110 in the $E_9\!+\!e$ family and $M_{11}$ in the $H_8\!+\!e$ family) and none with maximum degree greater than~5.
In~\cite{KMO2}, we identified exactly five such graphs with a unique degree-5 vertex: Cousin 29 in the $K_{3,3,1,1}$ family, Cousins 97 and 99 in the $E_9\!+\!e$ family, and $U_{12}$ and $U'_{12}$ in the $H_8\!+\!e$ family.

Since a MMIK graph has minimum degree at least~3, the remaining triangle-free IK graphs with 22 edges must have all vertices 
in $V_3$ or $V_4$, 
where $V_d$ denotes the vertices of degree $d$.
Such graphs fall into four types according to $(|V_4|,|V_3|)$: $(11,0)$, $(8,4)$, $(5,8)$, and $(2,12)$.
In this paper, we study the second type, namely $(8,4)$.

\begin{theorem} \label{thm:main}
There are exactly four triangle-free intrinsically knotted graphs 
with 22 edges having eight vertices of degree~4 and four vertices of degree~3: Cousins 43, 105, and 109 in the $E_9\!+\!e$ family and 
the graph $H_{12}\! +\! e$ in the $H_9\!+\!e$ family (see Figure~\ref{fig:main}).
\end{theorem}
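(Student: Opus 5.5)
The theorem has two halves. First, each of the four graphs is IK. Second, no other triangle-free graph with degree sequence $(4^8,3^4)$ and 22 edges is IK. Such a graph has $12$ vertices, since $8\cdot4+4\cdot3=44=2\cdot 22$.

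The first half is quick. Cousins 43, 105, 109 lie in the $E_9\!+\!e$ family and $H_{12}\!+\!e$ lies in the $H_9\!+\!e$ family. These families are generated from the IK graphs $E_9\!+\!e$ and $H_9\!+\!e$ by $\nabla Y$ moves, and $\nabla Y$ preserves intrinsic knottedness. I will also check directly that these four graphs are triangle-free with the stated degree sequence.

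For the converse I will use the method announced in the abstract, a criterion based on deleting a pair of vertices. Let $G$ be IK with the stated properties, and delete two vertices $a,b$.

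\begin{itemize}
\item If $a\nadj b$ and both have degree 4, then $G-\{a,b\}$ has $22-8=14$ edges on $10$ vertices.
\item If $a\adj b$, then $G-\{a,b\}$ has $22-7=15$ edges.
\end{itemize}

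The basic lemma I will use is the one from \cite{KLLMO,KMO2}: if $G-\{a,b\}$ is planar, then $G$ is not IK. Its proof embeds $G$ as a planar graph plus two vertices with a single crossing arrangement and uses the fact that every cycle then lies on a standard sphere with at most one crossing.

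To make the criterion usable, I will sharpen it by suppressing degree-2 vertices and deleting degree-1 vertices of $G-\{a,b\}$. This yields a smaller multigraph, and I will show it must be nonplanar, so it contains a subdivision of $K_{3,3}$ or $K_5$. With only about 14 edges left after reduction, triangle-freeness and the degree count force a $K_{3,3}$ subdivision using nearly every edge. This pins down $G-\{a,b\}$ almost completely, for every admissible pair $\{a,b\}$.

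The main step is a case analysis on the structure of $V_3$ and $V_4$. I will split according to the subgraph induced on the four degree-3 vertices (independent, containing one edge, two edges, a path, and so on) and on the neighborhoods of a chosen degree-4 vertex. For a nonadjacent pair $a,b\in V_4$, count the vertices of degree $\le 2$ in $G-\{a,b\}$. Nonplanarity with 14 edges on 10 vertices means $G-\{a,b\}$ is essentially $K_{3,3}$ with subdivisions and at most a few extra edges. The neighbors of $a$ and $b$ must then land on subdivision vertices or in prescribed positions.

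Applying this to several pairs simultaneously gives strong constraints. For instance, two degree-4 vertices with a common neighborhood, or the $N(a)\cap N(b)$ sizes, fix the configuration. From there I reconstruct $G$ and obtain a short finite list of candidates. Each candidate is then either identified with one of the four graphs or shown non-IK by exhibiting a pair whose deletion is planar.

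The hardest part will be keeping this enumeration finite and organized. The number of triangle-free $(4^8,3^4)$ graphs is large, so I would first try to show that there exist two nonadjacent degree-4 vertices $a,b$ with $|N(a)\cap N(b)|$ maximal, and then case on that intersection size (0 through 4). The cases with a large intersection should collapse quickly. The case of intersection size 0 or 1 is where I expect most of the work, requiring repeated use of the lemma with different pairs.
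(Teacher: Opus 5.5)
There is a genuine gap: the proposal describes a strategy but does not carry out the classification. Your ingredients are the right ones. A 2-apex graph is not IK, so $G-\{x,y\}$ must be nonplanar for every pair. For non-adjacent degree-4 vertices the remainder has $10$ vertices and $14$ edges, so it is close to a subdivided $K_{3,3}$. But the step ``from there I reconstruct $G$ and obtain a short finite list of candidates, each either identified with one of the four graphs or shown non-IK'' is the entire classification, and you give no argument for it. Your proposed splittings (by the subgraph induced on $V_3$, and by a pair maximizing $|N(a)\cap N(b)|$) are never tied to any structural consequence for $G-\{a,b\}$. You also say yourself that you do not yet know how to keep the enumeration finite. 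So nothing shows that the four listed graphs are the only survivors.

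The missing idea is a choice of pair that pins down the shape of $G_{a,b}$. At most $12$ edges join $V_3$ to $V_4$, so some $a\in V_4$ has at most one degree-3 neighbour $c$; otherwise there would be at least $16$ such edges. Then a degree-4 vertex $b\notin V(a)\cup V(c)\cup\{a\}$ exists, and $a,b$ are non-adjacent with no degree-3 common neighbour. For this pair $G_{a,b}$ has minimum degree $2$, so $2|V_4|+|V_3|=8$. The sequences $[3,2,5]$ and $[4,0,6]$ give planar graphs after smoothing. Only $[0,8,2]$, $[1,6,3]$, $[2,4,4]$ remain, meaning $\widehat G_{a,b}$ becomes $K_{3,3}$ after deleting one edge and contracting two, one, or no edges. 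This yields a handful of explicit skeletons: three cubic ones (including $M_4$), four on seven vertices, and $K_{3,3}$ with either a doubled edge or an extra edge $x_1x_2$.

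On each skeleton one first shows that most skeleton vertices must be adjacent to a degree-2 vertex. Otherwise one of $a,b$ misses that vertex and a suitable $G_{x,y}$ is planar. One then places the remaining neighbours of $a$ and $b$ and kills each placement by naming a second pair $\{x,y\}$ with $G_{x,y}$ planar. This is where $[0,8,2]$ yields nothing, $[1,6,3]$ yields only Cousin 43, and $[2,4,4]$ yields all four graphs. Maximizing $|N(a)\cap N(b)|$ does not give this control: degree-3 common neighbours become leaves of $G-\{a,b\}$, and the degree sequence of the remainder is no longer pinned down.

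Separately, your first half is slightly off. The $E_9\!+\!e$ and $H_9\!+\!e$ families are closed under both $\nabla Y$ and $Y\nabla$ moves, and $Y\nabla$ does not preserve intrinsic knottedness. So the IK-ness of these graphs must be cited from \cite{GMN,KM} rather than deduced from $\nabla Y$ moves alone.
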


\begin{figure}[ht] 
\includegraphics[scale=1]{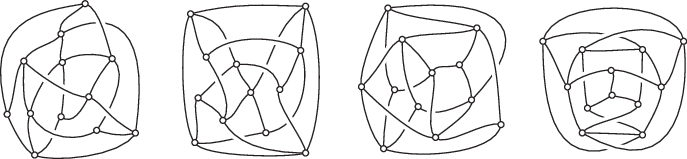}
\caption{Four triangle-free IK graphs of type $(8,4)$: Cousins 43, 105, and 109 in the $E_9\!+\!e$ family and 
the graph $H_{12}\!+\! e$ in the $H_9\!+\!e$ family}
\label{fig:main}
\end{figure}

As shown in \cite{GMN} and \cite{KM}, all graphs in the 
$E_9\!+\!e$ and $H_9\!+\!e$ families are IK, including
the four graphs in our theorem. Our proof amounts to verifying that, 
among the 728 simple triangle-free graphs of type $(8,4)$, 
those four are the only ones that are not $2$-apex.

\section{Preliminaries and degree sequence reduction} \label{sec:main}

Let $G=(V,E)$ be a graph with vertex set $V$ and edge set $E$.
We say that a vertex $a$ is adjacent to another vertex $b$, denoted by $a \adj b$, if there is an edge joining them; otherwise, we write $a \nadj b$.
Let $V(a)=\{b\! \in\! V\, |\, b \adj a \}$ denote the neighborhood of $a$.
We write $\Delta(G)$ and $\delta(G)$ for the maximum and minimum degrees of $G$, respectively.

Throughout the paper, let $G$ be a connected triangle-free IK graph with 22 edges, eight vertices of degree~4, and four vertices of degree~3.
For distinct vertices $a$ and $b$, let $G_{a,b}$ denote the graph obtained from $G$ by deleting the vertices $a$ and $b$, together with the interiors of all edges incident to them, and then deleting any resulting isolated vertices.
Note that $G_{a,b}$ may still contain vertices of degree~1.

\begin{lemma} \label{lem:choice_ab}
Let $G$ be a graph with eight degree-4 vertices and four degree-3 vertices.
Then there exists a pair $\{a,b\}$ of non-adjacent degree-4 vertices such that $V(a)\cap V(b)$ contains no degree-3 vertex.
\end{lemma}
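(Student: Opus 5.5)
The plan is a pure counting argument on the pairs of degree-4 vertices. I will show that the non-adjacent degree-4 pairs outnumber the degree-4 pairs having a common degree-3 neighbour. Throughout, $G$ is taken to be simple (as are all graphs considered in the paper), so two vertices are joined by at most one edge. No triangle-freeness or intrinsic knottedness is needed.

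First I count non-adjacent pairs. Let $x$ be the number of edges joining $V_4$ to $V_3$. Since $|V_3|=4$ and each degree-3 vertex has at most three neighbours, $x\le 12$. The degree sum over $V_4$ is $32$, so the number of edges with both ends in $V_4$ is $(32-x)/2$. There are $\binom{8}{2}=28$ pairs in $V_4$, so exactly
\[
28-\frac{32-x}{2}=12+\frac{x}{2}
\]
pairs of degree-4 vertices are non-adjacent.

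Next I bound the pairs sharing a degree-3 neighbour. For $w\in V_3$ let $d_w=|V(w)\cap V_4|$, so $0\le d_w\le 3$ and $\sum_{w\in V_3} d_w=x$. A pair $\{a,b\}\subset V_4$ with a degree-3 vertex in $V(a)\cap V(b)$ is counted by at least one $w$. Hence the number of such pairs is at most
\[
\sum_{w\in V_3}\binom{d_w}{2}.
\]
For $0\le d\le 3$ we have $\binom{d}{2}=d(d-1)/2\le d$, since $d-1\le 2$. So this number is at most $x$.

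Finally, the number of non-adjacent degree-4 pairs with no common degree-3 neighbour is at least
\[
12+\frac{x}{2}-x=12-\frac{x}{2}\ge 12-6=6>0.
\]
So such a pair $\{a,b\}$ exists. There is no real obstacle here; the only point needing care is that every bad pair is charged to some $w$ with $d_w\ge 2$, which the union bound handles, and that simplicity gives the edge counts.
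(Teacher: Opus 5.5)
Your counting argument is correct, but it takes a different route from the paper's.

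The paper argues locally. Because the four degree-3 vertices can absorb at most $12$ edges from $V_4$, some degree-4 vertex $a$ has at most one degree-3 neighbor. If it has none, any $b\in V_4\setminus(V(a)\cup\{a\})$ works. If it has exactly one, $c$, then $|(V(a)\cup V(c))\cap V_4|\le 6$ leaves some $b\in V_4\setminus(V(a)\cup V(c))$. Such a $b$ is automatically non-adjacent to $a$ and shares no degree-3 neighbor with it.

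You instead count globally. You compare the $12+x/2$ non-adjacent pairs in $V_4$ with the at most $\sum_{w\in V_3}\binom{d_w}{2}\le x$ pairs having a common degree-3 neighbor.

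The paper's proof is constructive: it tells you where to look for the pair, namely starting at a degree-4 vertex with at most one degree-3 neighbor. It uses only the trivial bounds $|V(a)|\le 4$ and $|V(c)|\le 3$. Your proof treats $a$ and $b$ symmetrically and yields more: at least $12-x/2\ge 6$ admissible pairs.

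One small remark: your inequality chain only needs the number of adjacent pairs inside $V_4$ to be at most $(32-x)/2$, not equal to it. So simplicity is not actually essential to your argument.
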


\begin{proof}
There must exist a degree-4 vertex $a$ having at most one neighbor of degree~3.
Indeed, otherwise every vertex in $V_4$ would have at least two neighbors in $V_3$, so there are at least $16$ edges joining $V_4$ to $V_3$.
But the four vertices in $V_3$ can be incident to at most $12$ such edges, a contradiction.

If $a$ has no neighbor of degree~3, then all four neighbors of $a$ lie in $V_4$, and so we may choose $b \in V_4 \setminus (V(a)\cup\{a\})$.
If $a$ has exactly one degree-3 neighbor $c$, then 
$|(V(a)\cup V(c)) \cap V_4| \le 6$, 
and hence we may choose
$b \in V_4 \setminus (V(a)\cup V(c))$.
In either case, $a$ and $b$ are non-adjacent degree-4 vertices, and $V(a)\cap V(b)$ contains no degree-$3$ vertex.
\end{proof}

For the remainder of the paper, fix a pair $\{a,b\}$ of vertices as in Lemma~\ref{lem:choice_ab}.
Then $G_{a,b}$ is a triangle-free graph with 10 vertices and 14 edges.
Moreover, by the choice of $a$ and $b$, it has no vertex of degree~1.
When $G_{a,b}$ has degree-2 vertices, we often remove each such vertex $x$ by {\em smoothing\/}, that is, by contracting an edge incident to $x$ so that the two neighbors of $x$ become adjacent.
The resulting graph, denoted by $\widehat{G}_{a,b}$, 
may contain 3-cycles, double edges, or loops.

A graph is called {\em 2-apex\/} if it can be made planar by deleting two vertices.
It is known that no $2$-apex graph is IK~\cite{BBFFHL,OT}.
Since $G$ is IK, neither $G_{a,b}$ nor $\widehat{G}_{a,b}$ can be planar.
Due to the Euler characteristic, each has a $K_{3,3}$ minor but no $K_5$ minor.
We classify the possible graphs $G_{a,b}$ according to their degree sequence $[\,|V_4|,|V_3|,|V_2|\,]$.

\begin{proposition}  \label{prop:deg_seq}
The possible degree sequences of $G_{a,b}$ are $[0,8,2]$, $[1,6,3]$, and $[2,4,4]$. 
\end{proposition}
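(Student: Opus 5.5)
We first enumerate the possible degree sequences of $G_{a,b}$ by counting. Since $a \nadj b$ and both have degree~4, deleting them removes exactly $8$ edges, so $G_{a,b}$ has $14$ edges; the $10$ remaining vertices all survive, because none becomes isolated. Each vertex of $G_{a,b}$ has degree equal to its degree in $G$ minus the number of its neighbors among $\{a,b\}$. By the choice in Lemma~\ref{lem:choice_ab}, a degree-3 vertex of $G$ is adjacent to at most one of $a,b$, so it keeps degree $3$ or $2$. A degree-4 vertex of $G$ keeps degree $4$, $3$, or $2$ according as it is adjacent to $0$, $1$, or $2$ of $a,b$. In particular there are no degree-1 vertices, as noted above.

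Next we write $[n_4,n_3,n_2]$ and use the two linear constraints $n_4+n_3+n_2=10$ and $4n_4+3n_3+2n_2=28$. Subtracting twice the first from the second gives $2n_4+n_3=8$. The candidates are therefore $[0,8,2]$, $[1,6,3]$, $[2,4,4]$, $[3,2,5]$ and $[4,0,6]$. It remains to exclude the last two.

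For $n_4\ge 3$ we use the standing requirement that $\widehat{G}_{a,b}$ is nonplanar. Smoothing all degree-2 vertices yields a multigraph whose vertices all have degree $3$ or $4$. In the case $[3,2,5]$ it has $5$ vertices and $14-5=9$ edges, with degree sequence $(4,4,4,3,3)$. In the case $[4,0,6]$ it has $4$ vertices and $8$ edges, all of degree $4$.

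A multigraph on at most $4$ vertices, with loops and multiple edges, is planar, since its underlying simple graph is a subgraph of $K_4$. This rules out $[4,0,6]$. The same argument works provided no degree-2 vertex lies on a cycle consisting only of degree-2 vertices. If it did, that cycle would be a connected component of $G_{a,b}$ all of whose vertices are adjacent to $a$ or $b$. We handle this separately: such a component contributes nothing to nonplanarity, so the nonplanar part of $G_{a,b}$ would have even fewer branch vertices.

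For $[3,2,5]$, $\widehat{G}_{a,b}$ has $5$ branch vertices. A $K_{3,3}$ minor needs $6$ vertices of degree at least $3$ in a subdivision, or a contraction yielding them. With only $5$ vertices of degree at least $3$, any $K_{3,3}$ subdivision is impossible. Since $\widehat{G}_{a,b}$ has no $K_5$ minor (as noted above), Kuratowski's theorem for multigraphs shows it is planar, a contradiction.

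The main obstacle is this last step: making precise that a multigraph with fewer than six vertices of degree at least $3$ cannot contain a $K_{3,3}$ subdivision. This holds because a $K_{3,3}$ subdivision requires six branch vertices of degree $3$. It also requires checking that a $K_5$ subdivision is excluded. The paper's Euler-characteristic remark rules out $K_5$ minors. Alternatively, one can argue directly: on $5$ vertices a $K_5$ subdivision would need all five vertices of degree at least $4$, but only three have degree $4$.

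If either of these arguments fails, we would instead argue directly in $G$. In case $[3,2,5]$ we have $5$ vertices adjacent to both or one of $a,b$, and $G_{a,b}$ with smoothing is small enough to embed by hand. From this we can exhibit a planar drawing of $G-\{a,b\}$, contradicting that $G$ is not 2-apex.
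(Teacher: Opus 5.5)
Your proof is correct and follows essentially the same route as the paper: the same linear count gives the five candidates, and $[3,2,5]$ and $[4,0,6]$ are excluded because the smoothed multigraph is planar, contradicting the nonplanarity of $G_{a,b}$. The one difference is in the five-vertex case, where the paper observes that nine edges force the underlying simple graph to be a subgraph of $K_5$ minus an edge, while you apply Kuratowski with a branch-vertex and degree count; your direct degree argument already excludes a $K_5$ subdivision, so neither the appeal to the ``no $K_5$ minor'' remark nor your final fallback paragraph is needed.
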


\begin{proof}
Since $G_{a,b}$ has 10 vertices and 14 edges, we have $|V_4|+|V_3|+|V_2|=10$ and $4|V_4|+3|V_3|+2|V_2|=28$.
Subtracting twice the first equation from the second gives $2|V_4|+1|V_3|=8$.
If $|V_4|=0$, 1, or 2, we obtain the three degree sequences listed above.

If instead $|V_4|=3$ or $4$, then the degree sequences are $[3,2,5]$ or $[4,0,6]$, respectively.
After smoothing all degree-2 vertices, we obtain either a multigraph on five vertices with nine edges or a multigraph on four vertices.
Both are planar, contradicting the nonplanarity of $G_{a,b}$.
Hence these cases cannot occur.
\end{proof}

For the four graphs in Theorem~\ref{thm:main}, the associated graphs $G_{a,b}$ arise as follows.
Cousin $43$ yields five distinct graphs $G_{a,b}$: four of type $[2,4,4]$ and one of type $[1,6,3]$.
Each of Cousins $105$ and $109$, as well as $H_{12}\!+\!e$, yields two graphs $G_{a,b}$, both of type $[2,4,4]$.

\section{Degree sequence $[0,8,2]$} \label{sec:082}

Recall that $G$ is a connected triangle-free IK graph with eight degree-$4$ vertices and four degree-$3$ vertices, and that $\{a,b\}$ is a pair of non-adjacent degree-$4$ vertices such that $V(a)\cap V(b)$ contains no degree-$3$ vertex.
In this section, we consider the case in which $G_{a,b}$ has degree sequence $[0,8,2]$.
Let $d_1$ and $d_2$ denote the two degree-$2$ vertices of $G_{a,b}$.
As noted at the end of Section~\ref{sec:main}, we will show that there is no triangle-free IK graph $G$ that produces such a graph $G_{a,b}$.

After smoothing $d_1$ and $d_2$, we obtain a nonplanar graph $\widehat{G}_{a,b}$ with eight degree-$3$ vertices and $12$ edges.
Since $\widehat{G}_{a,b}$ is nonplanar, it has a $K_{3,3}$ minor.
Hence we may assume that there are three edges $e_1$, $e_2$, and $e_3$ such that $K_{3,3}$ is obtained from $\widehat{G}_{a,b}$ by deleting $e_1$ and then contracting $e_2$ and $e_3$, as follows:
$$\begin{array}{ccccc}
\widehat{G}_{a,b} & \rightarrow & \! \widehat{G}_{a,b} \setminus e_1 \! & \! \rightarrow \! & \! (\widehat{G}_{a,b} \setminus e_1) / \{e_2,e_3\}\, \simeq\, K_{3,3} \\[3pt]
\! {[}0,8,0{]} \! & & {[}0,6,2{]} & & {[}0,6,0{]} \quad \quad \quad \\
\end{array}$$

Once the edge $e_1$ to be deleted from $\widehat{G}_{a,b}$ is fixed, one contraction is performed at each of its two endpoints in $\widehat{G}_{a,b}\setminus e_1$. 
According to the positions of these two endpoints, three cases arise: they may lie on a single edge of $K_{3,3}$, on two adjacent edges, or on two non-adjacent edges. 
By the symmetry of $K_{3,3}$, there are three possible forms of $\widehat{G}_{a,b}$, shown in Figure~\ref{fig:082}(a), (b), and (c), respectively. 
We treat these three cases in the following subsections.

\begin{figure}[ht] 
\includegraphics[scale=1]{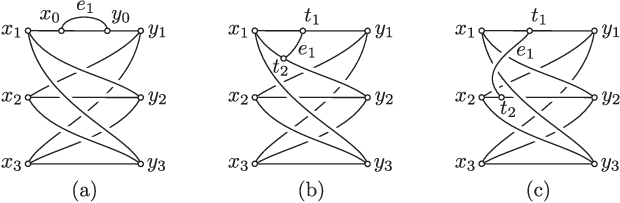}
\caption{Three possible graphs $\widehat{G}_{a,b}$ when $G_{a,b}$ has degree sequence [0,8,2].}
\label{fig:082}
\end{figure}

\subsection{$\widehat{G}_{a,b}$ has double edges} \

We now consider the case in which $\widehat{G}_{a,b}$ is the multigraph shown in Figure~\ref{fig:082}(a), using the labeling indicated there.
Since $G_{a,b}$ is triangle-free, the degree-$2$ vertices $d_1$ and $d_2$ must lie on the two doubled edges.
There are two possibilities: either (1) $d_1$ and $d_2$ lie on different doubled edges, or (2) both lie on the same doubled edge.
In both cases, we assume that $d_1 \adj x_0$ and $a \adj d_1$ in the original graph $G$.
Clearly, $a \nadj x_0$.

Suppose that either $a \adj d_2$ in case (1) or $a \adj y_0$ in case (2).
Let $z_1,z_2$ denote the remaining two neighbors of $a$. 
Since $G$ is triangle-free, 
in case (2), $\{z_1,z_2\}$ is either $\{y_2,y_3\}$ or two from $\{x_1,x_2,x_3\}$. In case (1), $\{z_1, z_2\}$
is $\{x_1,y_1\}$ or else two from $\{x_1,x_2,x_3\}$
or $\{y_1,y_2,y_3\}$.
In each case, a direct inspection of the labeled graph shows that $G_{z_1,z_2}$ is planar, contradicting the fact that $G$ is not $2$-apex.
Thus $a \nadj d_2,y_0$.

Therefore, we may assume that $a \adj x_1,x_2,x_3$ (or symmetrically, $a \adj y_1,y_2,y_3$).
Again by direct inspection, $G_{b,x_2}$ is planar.
Therefore the double-edge case cannot occur.

\subsection{$\widehat{G}_{a,b}$ has a 3-cycle} \

We consider the case in which $\widehat{G}_{a,b}$ contains the $3$-cycle $C=(x_1t_1t_2)$ as shown in Figure~\ref{fig:082}(b).
Since $G_{a,b}$ is triangle-free, we may assume that $d_1$ lies on $C$.
We further assume that $a \adj d_1$ in $G$.

We first observe that $d_2 \nadj x_2,x_3$.
Indeed, suppose for example that $d_2 \adj x_2$.
Then, unless $a \adj x_3$, 
one of $G_{b,y_1}$ and $G_{b,y_2}$ is planar,
since the four edges incident to $a$ can be embedded in the plane without crossing.
If instead $a \adj x_3$, then $b \nadj x_3$, since $x_3$ has degree at most $4$ in $G$.
But then 
$G_{a,y_1}$ or $G_{a,y_2}$
is planar, a contradiction.
Hence $d_2 \nadj x_2,x_3$.

Now consider $G_{b,y_1}$.
For this graph to be nonplanar, we must have $a \adj x_2,x_3$.
In this case, for $G_{x_2,x_3}$ to be nonplanar, 
the vertex $d_2$ must lie either on the edge $(x_1y_3)$ or on the edge $(t_iy_i)$ for some $i=1,2$, and moreover $a \adj d_2$. 
By symmetry, it suffices to consider only the cases where $d_2$ lies on $(x_1y_3)$ or on $(t_2y_2)$. 
Finally, consider $G_{a,y_1}$. 
Since $b \nadj x_2,x_3$, in either case, the graph $G_{a,y_1}$ is 
planar, which is a contradiction, unless 
$d_1$ subdivides $(x_1t_2)$ and 
$b \adj d_1, t_1$.
On the other hand, when $d_1$ lies on $(x_1t_2)$ and both $d_1$ and $t_1$ are in $N(b)$, then $G_{b,x_3}$ is planar
and we again have a contradiction.

The 
contradiction
shows that $\widehat{G}_{a,b}$ cannot contain a $3$-cycle.


\subsection{$\widehat{G}_{a,b}$ is the M\"obius ladder $M_4$} \

The graph in Figure~\ref{fig:082}(c) is isomorphic to the M\"obius ladder $M_4$.
So we label the vertices as in Figure~\ref{fig:M4}.
Recall that $M_4$ consists of the $8$-cycle $(v_1v_2v_3v_4v_5v_6v_7v_8)$ together with the four edges, called rungs, $(v_1v_5)$, $(v_2v_6)$, $(v_3v_7)$ and $(v_4v_8)$. In the labeling of Figure~\ref{fig:082}(c), the $8$-cycle $(v_1v_2v_3v_4v_5v_6v_7v_8)$ corresponds to $( x_1t_1y_1 x_3 y_2 t_2 x_2 y_3 )$.

\begin{figure}[ht] 
\includegraphics[scale=1]{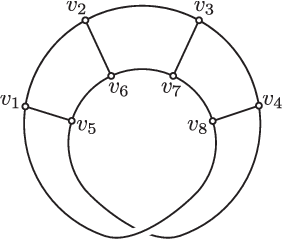}
\caption{The M\"obius ladder $M_4$.}
\label{fig:M4}
\end{figure}

We first claim that for any three consecutive vertices $v_i$, $v_{i+1}$, and $v_{i+2}$ on the $8$-cycle, at least one of them is adjacent to a degree-$2$ vertex of $G_{a,b}$.
Suppose otherwise that $v_1$, $v_2$, and $v_3$ are not adjacent to any degree-$2$ vertex.
Then consider $G_{v_1,v_3}$.
Since $v_2$ has degree $3$ in $G_{a,b}$, it is adjacent to at most one of $a$ and $b$ in $G$.
As $v_2$ is not adjacent to any degree-$2$ vertex, it follows that $G_{v_1,v_3}$ is planar, a contradiction.

Now suppose that a degree-$2$ vertex lies on the $8$-cycle, say $d_1 \adj v_1,v_2$.
By the claim, the other degree-$2$ vertex $d_2$ must be adjacent to at least one vertex of each of the sets $\{v_3,v_4,v_5\}$, $\{v_4,v_5,v_6\}$, $\{v_5,v_6,v_7\}$, and $\{v_6,v_7,v_8\}$.
Hence $d_2 \adj v_5,v_6$.
Therefore the four vertices $v_3$, $v_4$, $v_7$, and $v_8$ are not adjacent to any degree-$2$ vertex.
Now consider $G_{v_3,v_8}$.
Both $v_4$ and $v_7$ have degree $3$ in $G_{a,b}$, so each is adjacent to at most one of $a$ and $b$.
Since neither is adjacent to a degree-$2$ vertex, $G_{v_3,v_8}$ is planar, again a contradiction.
Thus both degree-$2$ vertices must lie on rungs.

If the two degree-$2$ vertices lie on the same rung or on two consecutive rungs, say $(v_1v_5)$ and $(v_2v_6)$, then the same argument as above shows that $G_{v_3,v_8}$ is planar.
Therefore we may assume that they lie on two non-consecutive rungs, namely $d_1 \adj v_1,v_5$ and $d_2 \adj v_3,v_7$.

First suppose that $a \adj d_1,d_2$.
Then, without loss of generality, we may assume that $a \adj v_2,v_4$.
If $b \adj d_1,d_2$, then $b \adj v_6,v_8$, and in this case $G_{v_1,v_4}$ is planar.
On the other hand, if $b \nadj d_1,d_2$, then $G_{v_1,v_3}$ is planar, since $b \nadj v_2$.
Thus it is not true that
$a \adj d_1,d_2$.
Similarly, we can exclude the case where $a \nadj d_1,d_2$ forces $b \adj d_1,d_2$.

Therefore, without loss of generality, we may assume that $a \adj d_1$ and $a \nadj d_2$, so that $b \adj d_2$.
Since $G$ is triangle-free, we may assume that
$a \adj v_2,v_4,v_7$. 
Then $b \adj v_6,v_8,d_1$.
But now $G_{v_2,v_4}$ is planar.
Hence $G$ is $2$-apex, a contradiction.
Therefore this case cannot occur.

\section{Degree sequence $[1,6,3]$} \label{sec:163}

In this section, we consider the second case, in which $G_{a,b}$ has degree sequence $[1,6,3]$.
Let $d_1$, $d_2$, and $d_3$ denote the degree-$2$ vertices of $G_{a,b}$.
As noted at the end of Section~\ref{sec:main}, we will show that the only graph $G$ yielding such a graph $G_{a,b}$ is Cousin $43$ in the $E_9\!+\!e$ family.

After smoothing $d_1$, $d_2$, and $d_3$, we obtain a nonplanar graph $\widehat{G}_{a,b}$ with one degree-$4$ vertex and six degree-$3$ vertices, 
having $11$ edges.
Since three edge contractions are used, $\widehat{G}_{a,b}$ may contain a loop or double edges, even though $G_{a,b}$ is a simple triangle-free graph.
Because $\widehat{G}_{a,b}$ is nonplanar, it has a $K_{3,3}$ minor.
Hence we may assume that there are two edges $e_1$ and $e_2$ such that $K_{3,3}$ is obtained from $\widehat{G}_{a,b}$ by deleting $e_1$ and then contracting $e_2$, as follows:
$$\begin{array}{ccccc}
\widehat{G}_{a,b} & \rightarrow & \! \widehat{G}_{a,b} \setminus e_1 \! & \! \rightarrow \! & \! (\widehat{G}_{a,b} \setminus e_1) / e_2\, \simeq\, K_{3,3} \\[3pt]
\! {[}1,6,0{]} \! & & & & {[}0,6,0{]} \quad \quad \quad \quad \\
\end{array}$$

Once the deleted edge $e_1$ is fixed, the contraction is performed at an endpoint $z$ of $e_1$ that has degree $1$ or $2$ in $\widehat{G}_{a,b}\setminus e_1$.
According to the type and position of $e_1$, four cases arise, as shown in Figure~\ref{fig:163}(a)--(d), respectively:
\begin{enumerate}
\item[(a)] $e_1$ is a loop incident to a degree-$3$ vertex;
\item[(b)] $e_1$ is a loop incident to a degree-$4$ vertex;
\item[(c)] $e_1$ is a double edge;
\item[(d)] $e_1$ is neither a loop nor a double edge.
\end{enumerate}
In the first three cases, $\widehat{G}_{a,b}$ is a multigraph, whereas in the last case it is a simple graph.

\begin{figure}[ht] 
\includegraphics[scale=1]{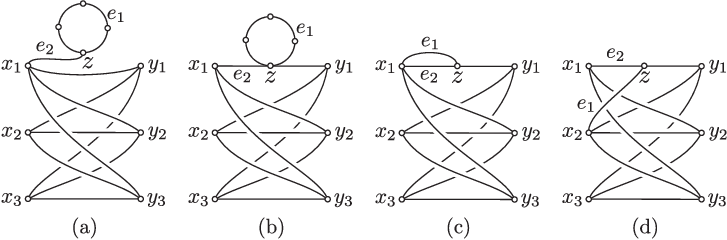}
\caption{Four possible graphs $\widehat{G}_{a,b}$ when $G_{a,b}$ has degree sequence $[1,6,3]$}
\label{fig:163}
\end{figure}

\subsection{$\widehat{G}_{a,b}$ is a multigraph}\label{subsec:multi}\ 

We first consider case (a) shown in Figure~\ref{fig:163}(a), using the labeling indicated there.
Since $G_{a,b}$ is a simple triangle-free graph, the three degree-$2$ vertices $d_1$, $d_2$, and $d_3$ lie on the loop $e_1$ in this order.
Again, since $G$ is triangle-free, we may assume that $a \adj d_1,d_3$ and $b \adj d_2$.
Then $a$ is either adjacent to both $x_2$ and $x_3$, or to two vertices among $y_1$, $y_2$, and $y_3$.
In either case, $G_{b,x_1}$ is planar.

Now consider case (b) shown in Figure~\ref{fig:163}(b).
As in the previous case, we may assume that $a \adj d_1,d_3$ and $b \adj d_2$.
Without loss of generality, we may assume that $b \adj y_1,y_2,y_3$, since $b$ has three remaining neighbors.
This implies that $a$ is adjacent to two vertices among $x_1$, $x_2$, and $x_3$, say $x_i$ and $x_j$.
Then $G_{x_i,x_j}$ is planar.

Finally, consider case (c) shown in Figure~\ref{fig:163}(c), where $\widehat{G}_{a,b}$ has double edges $e_1$ and $e_2$.
At least two of the degree-$2$ vertices lie on these double edges.
Hence at least one of $x_2$ and $x_3$ is not adjacent to any degree-$2$ vertex, say $x_2$.
Since $G$ has maximum degree $4$, one of $a$ and $b$ is not adjacent to $x_2$, 
assume
$a \nadj x_2$.
Then $G_{b,y_1}$ is planar.

Thus none of the multigraph cases can occur.

\subsection{$\widehat{G}_{a,b}$ is a simple graph}\label{subsec:simple}\ 

In this subsection, we consider case (d) shown in Figure~\ref{fig:163}(d), where $e_1$ joins a degree-$4$ vertex and a degree-$3$ vertex that are not adjacent in $K_{3,3}$.
For convenience, we redraw $\widehat{G}_{a,b}$ in the form shown in Figure~\ref{fig:sg711}, with the labeling indicated there.
Clearly, a degree-$2$ vertex, say $d_3$, lies on the $3$-cycle $(wx_1x_2)$.

\begin{figure}[ht] 
\includegraphics{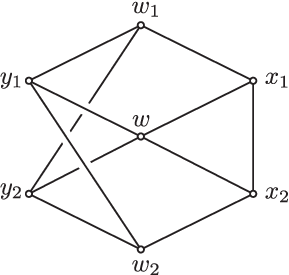}
\caption{A nonplanar simple graph.}
\label{fig:sg711}
\end{figure}

In this case, we determine $G$ directly through a sequence of restrictions.
Suppose first that $y_1$ is not adjacent to any degree-$2$ vertex.
Since $G$ has maximum degree $4$, one of $a$ and $b$ is not adjacent to $y_1$; say $a \nadj y_1$.
Then $G_{b,w}$ is planar.
Therefore, we may assume that $y_1 \adj d_1$, and similarly that $y_2 \adj d_2$.

We next claim that each of $a$ and $b$ is adjacent to one vertex from each of the pairs $\{y_1,d_1\}$ and $\{y_2,d_2\}$, but not to both $y_1$ and $y_2$.
Indeed, if $a \nadj d_1,y_1$, then again $G_{b,w}$ is planar.
Now suppose that $a \adj y_1,y_2$.
If $a \adj w_1$, then $w_1 \adj d_1,d_2$, and furthermore $b \adj d_1,d_2$ since $a \nadj d_1,d_2$.
Then $G_{y_1,y_2}$ is planar, since $a \nadj x_1,w,w_2$ and $b \nadj w$.
Hence $a \nadj w_1,w_2$.
It follows that $a$ is adjacent to exactly two vertices among $x_1$, $x_2$, and $d_3$, and then $G_{y_1,y_2}$ is planar.
This contradiction proves the claim.

We now show that neither $w_1$ nor $w_2$ is adjacent to both $d_1$ and $d_2$.
Suppose, for a contradiction, that $w_1 \adj d_1,d_2$.
If $a \adj y_1$, then $a \adj d_2$ and $b \adj d_1$.
Thus $a$ is adjacent to exactly two vertices among $x_1$, $x_2$, and $d_3$.
Then $G_{y_1,d_2}$ is planar, except possibly when $d_3 \adj b,w,x_2$.
If $d_3 \adj b,w,x_2$, then $a \adj d_3,x_1$, and hence $G_{x_1,y_1}$ is planar.
Thus both $a$ and $b$ must be adjacent to both $d_1$ and $d_2$, and so each is adjacent to exactly two vertices among $d_3$, $w_2$, $x_1$, and $x_2$.
Since $G$ has maximum degree $4$, we may assume that $a \nadj w_2$ and $b \adj w_2$.
Then $G_{d_1,d_2}$ is planar regardless of the position of $d_3$, a contradiction.

We claim next that $w$ is adjacent to neither $d_1$ nor $d_2$.
First suppose, for a contradiction, that $w \adj d_1,d_2$.
If $a \adj y_1$, then $a \adj d_2$ and $b \adj d_1$.
Thus $a$ is adjacent to exactly two vertices among $x_1$, $x_2$, and $d_3$.
If also $b \adj y_2$, then similarly $b$ is adjacent to exactly two vertices among $x_1$, $x_2$, and $d_3$, which is impossible without creating a triangle.
Hence $b \adj d_2$.
It follows that $G_{y_1,d_2}$ is planar, except possibly when $d_3 \adj b,x_1,x_2$.
If $d_3 \adj b,x_1,x_2$, then $a \adj x_1,x_2$.
Moreover, $b$ is adjacent to either $w_1$ or $w_2$, and in either case $G_{x_1,x_2}$ is planar.
Thus $a \nadj y_1,y_2$, and similarly $b \nadj y_1,y_2$, so each of $a$ and $b$ is adjacent to both $d_1$ and $d_2$.
This implies that each of $a$ and $b$ is adjacent to exactly two vertices among $d_3$, $w_1$, $w_2$, $x_1$, and $x_2$.
Then $G_{d_1,d_2}$ is planar for every possible configuration, again a contradiction.

Next suppose, for a contradiction, that $w \adj d_1$ and $w_2 \adj d_2$.
If $a \adj y_1$, then $a \adj d_2$ and $b \adj d_1$.
Thus $a$ is adjacent to exactly two vertices among $x_1$, $x_2$, and $d_3$.
Then $G_{y_1,d_2}$ is planar, except possibly when $d_3 \adj b,w_1,x_1$.
If $d_3 \adj b,w_1,x_1$, then $a \adj x_2,d_3$.
Now there are two possibilities: either $b \adj w_1,d_2$, in which case $G_{y_1,d_2}$ is planar, or $b \adj w_2,y_2$, in which case $G_{y_1,d_3}$ is planar.
Thus $a \nadj y_1$.
If instead $a \adj y_2$, then $a \adj d_1$ and $b \adj d_1,d_2$.
If $d_3$ is not adjacent to both $a$ and $b$, then $G_{y_2,d_1}$ is planar.
Hence $d_3 \adj a,b$.
If $a \nadj w_2$, then $a$ is adjacent to one of $x_1$ and $x_2$, which implies that $G_{y_2,d_1}$ is planar.
Hence $a \adj w_2$.
Similarly, if $b \nadj x_2$, then $G_{y_2,d_1}$ is planar, so $b \adj x_2$.
But then $G_{y_2,d_3}$ is planar.
Thus $a \nadj y_2$.
Therefore $a \adj d_1,d_2$, and similarly $b \adj d_1,d_2$.
In this case, $a$ and $b$ can be adjacent only to vertices among $d_3$, $w_1$, $x_1$, and $x_2$.
We may assume that $a \adj w_1$, and then $b$ is adjacent to exactly two vertices among $d_3$, $x_1$, and $x_2$.
In any case, $G_{d_1,d_2}$ is planar.
This contradiction proves that $w$ is adjacent to neither $d_1$ nor $d_2$.

From the above, we conclude that $d_1 \adj w_1,y_1$ and $d_2 \adj w_2,y_2$.
If $d_3 \adj x_1,x_2$, then $G_{a,y_1}$ is planar.
Furthermore, if $d_3 \adj w$ and $a \nadj d_3$, then $G_{b,y_2}$ is planar.
So, without loss of generality, we may assume that $d_3 \adj a,b,w,x_1$.

Now suppose, for a contradiction, that $a \adj d_1,d_2$.
Then $a \adj x_2$, and so $b$ cannot be adjacent to both $d_1$ and $d_2$, since $x_2$ cannot support any further adjacency.
If $b \adj y_1$, then $b \adj d_2$, and hence $b \adj w_1$.
Then $G_{y_1,d_3}$ is planar.
If instead $b \adj y_2$, then $b \adj d_1$, and hence $b \adj w_2$.
Then $G_{y_2,d_3}$ is planar.
Therefore $a$ is not adjacent to both $d_1$ and $d_2$, and similarly neither is $b$.

Finally, we conclude that $a \adj y_1,d_2,d_3$ and $b \adj y_2,d_1,d_3$, where $d_1 \adj w_1,y_1$, $d_2 \adj w_2,y_2$, and $d_3 \adj w,x_1$.
Thus $a$ is adjacent to one of $w_1$ and $x_2$, and $b$ is adjacent to one of $w_2$ and $x_2$.
If $a \adj x_2$, then $G_{y_1,y_2}$ is planar.
If $b \adj x_2$, then $G_{y_2,d_3}$ is planar.
Hence $a \adj w_1$ and $b \adj w_2$.
The resulting graph is Cousin $43$ in the $E_9\!+\!e$ family.

\section{Degree sequence $[2,4,4]$} \label{sec:224}

In this section, we consider the third case, in which $G_{a,b}$ has degree sequence $[2,4,4]$.
Let $d_1$, $d_2$, $d_3$, and $d_4$ denote the degree-$2$ vertices of $G_{a,b}$.
As noted at the end of Section~\ref{sec:main}, we will show that the graphs $G$ yielding such a graph $G_{a,b}$ are precisely Cousins $43$, $105$, and $109$ in the $E_9\!+\!e$ family, and $H_{12}\!+\!e$ in the $H_9\!+\!e$ family.

After smoothing each degree-$2$ vertex, we obtain a nonplanar graph $\widehat{G}_{a,b}$ with two degree-$4$ vertices, four degree-$3$ vertices, and $10$ edges.
Because $\widehat{G}_{a,b}$ is nonplanar, it has a $K_{3,3}$ minor.
Hence there is an edge $e$ such that $K_{3,3}$ is obtained from $\widehat{G}_{a,b}$ by deleting $e$:
$$\begin{array}{ccc}
\widehat{G}_{a,b} \! & \! \rightarrow \! & \! \widehat{G}_{a,b} \setminus e\,  \simeq\, K_{3,3}
\end{array}$$

Let $K_{3,3}^+$ denote the graph obtained from $K_{3,3}$, with bipartition $\{x_1,x_2,x_3\}$ and $\{y_1,y_2,y_3\}$, by adding one extra edge joining $x_1$ and $x_2$.
Note that each endpoint of $e$ has degree $4$ in $\widehat{G}_{a,b}$.
According to the position of the endpoints of $e$, three cases arise:
\begin{enumerate}
\item[(a)] $e$ is a double edge, that is, $\widehat{G}_{a,b}$ is $K_{3,3}$ with double edges;
\item[(b)] $\widehat{G}_{a,b}$ is $K_{3,3}^+$, and the extra edge $e$ has some degree-$2$ vertices of $G_{a,b}$;
\item[(c)] $\widehat{G}_{a,b}$ is $K_{3,3}^+$, and the extra edge $e$ has no degree-$2$ vertex of $G_{a,b}$.
\end{enumerate}
These three cases are treated in the following three subsections.

\subsection{$\widehat{G}_{a,b}$ is $K_{3,3}$ with double edges} \label{subsec:Hdouble} \ 

Suppose that $\widehat{G}_{a,b}$ is obtained from $K_{3,3}$, with bipartition $\{x_1,x_2,x_3\}$ and $\{y_1,y_2,y_3\}$, by adding an edge $e$ connecting $x_1$ and $y_1$ as a double edge.
In this case, the resulting graph $G$ may be Cousin $43$ or $109$ in the $E_9\!+\!e$ family.
Let $e'$ denote the other double edge parallel to $e$.
Since $G$ is triangle-free, the two doubled edges $e$ and $e'$ together contain at least two degree-$2$ vertices of $G_{a,b}$, say $d_1$ and $d_2$.

Suppose that one of the vertices $x_2$, $x_3$, $y_2$, or $y_3$ is not adjacent to any degree-$2$ vertex of $G_{a,b}$; say $x_2$.
Since $G$ has maximum degree $4$, one of $a$ and $b$ is not adjacent to $x_2$; say $a \nadj x_2$.
Then $G_{b,y_1}$ is planar, a contradiction.
Therefore each of $x_2$, $x_3$, $y_2$, and $y_3$ is adjacent to a degree-$2$ vertex, namely one of $d_3$ and $d_4$.
Without loss of generality, we may assume that $d_3 \adj x_2, y_2$ and $d_4 \adj x_3, y_3$.

We now distinguish two cases.
First, suppose that one doubled edge, say $e$, contains both $d_1$ and $d_2$.
Then $a$ and $b$ must be adjacent to different vertices among $d_1$ and $d_2$.
Hence the remaining three neighbors of each of $a$ and $b$ lie on the $6$-cycle $(x_2d_3y_2x_3d_4y_3)$.
Without loss of generality, we may assume that $a \adj x_2,y_2,d_4$ and $b \adj d_3,x_3,y_3$.
Then the resulting graph $G$ is Cousin $43$ in the $E_9\!+\!e$ family.

Next, suppose that $e$ contains $d_1$ and $e'$ contains $d_2$.
If $a$ and $b$ are adjacent only to different vertices among $d_1$ and $d_2$, then, by the same argument as above, we may assume that $a \adj x_2,y_2,d_4$ and $b \adj d_3,x_3,y_3$.
But then $G_{x_1,y_1}$ is planar.
Therefore one of $a$ and $b$ must be adjacent to both $d_1$ and $d_2$; without loss of generality, assume that $a \adj d_1,d_2$.
If $a \adj x_2$ (and similarly if $a \adj x_3$, $y_2$, or $y_3$), then $G_{b,x_1}$ is planar.
Therefore $a \adj d_3,d_4$.

If $b \adj d_1,d_2$, then, as above, $b \adj d_3,d_4$, and it follows that $G_{x_1,y_1}$ is planar.
If $b \nadj d_1,d_2$, then $b$ has four neighbors on the $6$-cycle $x_2 d_3 y_2 x_3 d_4 y_3$, which creates a triangle.
Thus we may assume that $b \adj d_1$ and $b \nadj d_2$.
By symmetry, we may further assume that $b \adj x_2,y_2,d_4$.
Then the resulting graph $G$ is Cousin $109$ in the $E_9\!+\!e$ family.

\subsection{$\widehat{G}_{a,b}$ is $K_{3,3}^+$, and $e$ has 
a degree-$2$ vertex 
of $G_{a,b}$.} \label{subsec:Hplusdiv} \ 

Suppose that $\widehat{G}_{a,b}$ is $K_{3,3}^+$ and that the added edge $e$ contains a degree-$2$ vertex of $G_{a,b}$.
Without loss of generality, assume that $d_4$ lies on $e$.
In this case, the resulting graph $G$ may be Cousin $105$ or $109$ in the $E_9\!+\!e$ family, or $H_{12}\!+\!e$.

We first show that each vertex $y_i$ is adjacent to a degree-$2$ vertex of $G_{a,b}$.
Suppose, for example, that no degree-$2$ vertex is adjacent to $y_1$.
Since $G$ has maximum degree $4$, one of $a$ and $b$ is not adjacent to $y_1$; say $a \nadj y_1$.
Then $G_{b,x_1}$ is planar, a contradiction.
Hence, after relabeling if necessary, we may assume that $y_i \adj d_i$ for $i=1,2,3$.
Up to symmetry, we have six cases according to the placement of $d_1$, $d_2$, and $d_3$ as shown in Figure~\ref{fig:24416}.

\begin{figure}[h!]
\includegraphics[scale=1]{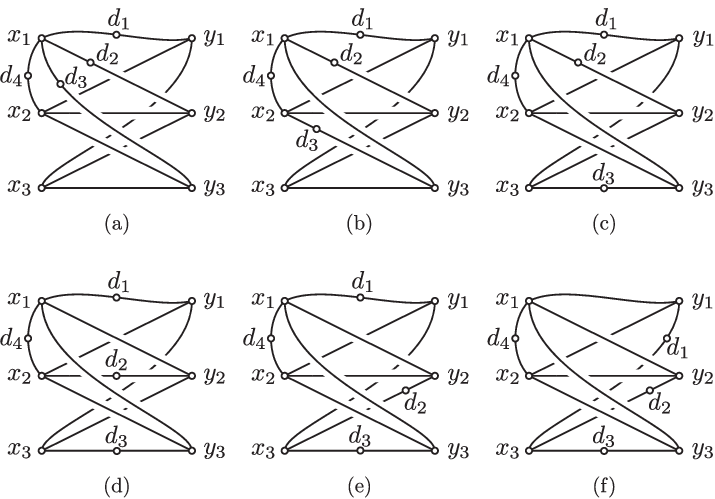}
\caption{Six cases with a vertex on edge $e$.}

\label{fig:24416}
\end{figure}

\noindent
\underline{Observation}:
each of $a$ and $b$ is adjacent to exactly one vertex from each pair $\{y_i,d_i\}$.
Otherwise, for example, if $a$ is adjacent to neither $y_1$ nor $d_1$, then again $G_{b,x_1}$ would be planar.\vspace{2mm}

\noindent{\bf Cases (a) and (b).} \
Suppose that $x_1 \adj d_1,d_2$ and either $x_1 \adj d_3$ (Case~(a)) or $x_2 \adj d_3$ (Case~(b)).
If $a \adj y_1,y_2$, then $a \adj d_4$, and so $G_{y_1,y_2}$ is planar, since the remaining neighbor of $a$ is either $d_3$ or $y_3$.
Thus each of $a$ and $b$ is adjacent to at most one of $y_1$ and $y_2$.
Using our observation, 
there are three cases:
$$ a,b \adj d_1,d_2; \qquad
a \adj d_1,d_2 \text{ and } b \adj y_1,d_2; \qquad
a \adj y_2,d_1 \text{ and } b \adj y_1,d_2.$$
In the first case, $G_{d_1,d_2}$ is planar.
In the second case, $b \adj d_4$, and the remaining neighbor of $b$ is either $y_3$ or $d_3$. 
If $y_3$, then $G_{y_1,y_3}$ is planar and, if $d_3$, then
one of $G_{y_1,d_3}$ or $G_{y_1,d_4}$ is planar.
In the last case, $a,b \adj d_4$, and so $G_{y_1,y_2}$ is planar.\vspace{2mm}

\noindent{\bf Case (c).} \
Suppose that $x_1 \adj d_1,d_2$ and $x_3 \adj d_3$.
We show that, in this case, $G$ is Cousin $105$ in the $E_9\!+\!e$ family.
By the same argument as in the previous case, each of $a$ and $b$ is adjacent to at most one of $y_1$ and $y_2$.
Hence, 
using our observation,
there are three cases:
$a,b \adj d_1,d_2$; $a \adj d_1,d_2$ and $b \adj y_1,d_2$; and $a \adj y_2,d_1$ and $b \adj y_1,d_2$.

In the first case, without loss of generality, assume that $a \adj d_3$.
Then $a \adj d_4$, and hence $G_{d_1,d_2}$ is planar.
In the second case, $b \adj d_4$, and the remaining neighbor of $b$ is either $y_3$ or $d_3$.
If $b \adj y_3$, then $G_{y_1,y_3}$ is planar.
If $b \adj d_3$ and $a \nadj x_3$, then $G_{y_1,d_2}$ is planar.
If $b \adj d_3$ and $a \adj x_3$, then 
also $a \adj y_3$, 
and
$G_{d_1,d_2}$ is planar.

It remains to consider the last case.
Since $x_3 \nadj a,b$, we have $d_4 \adj a,b$.
Without loss of generality, assume that $a \adj d_3$.
Then $b$ is adjacent to either $y_3$ or $d_3$.
If $b \adj y_3$, then $G_{y_1,y_3}$ is planar.
If $b \adj d_3$, then the resulting graph $G$ is Cousin $105$ in the $E_9\!+\!e$ family.\vspace{2mm}

\noindent{\bf Case (d).} \
Suppose that $x_i \adj d_i$ for $i=1,2,3$.
We show that $G$ is Cousin $105$ or $109$ in the $E_9\!+\!e$ family, or $H_{12}\!+\!e$.

First suppose that $a \adj x_3$; the case $b \adj x_3$ is symmetric.
Then, 
by our observation
and the triangle-free condition, we have $a \adj y_3,d_1,d_2$.
Hence $b \adj d_3,d_4$.
If $b$ is adjacent to at least one of $y_1$ and $y_2$, say $y_1$, then $G_{x_1,y_1}$ is planar.
Otherwise, $b \adj d_1,d_2$, and $G$ is Cousin $105$ in the $E_9\!+\!e$ family.

We may therefore assume that $a,b \nadj x_3$.
Then $a,b \adj d_4$.
First consider the case in which $a,b \adj d_1,d_2$.
If $a,b \adj d_3$, then $G_{d_1,d_3}$ is planar.
Otherwise, without loss of generality, assume that $a \adj d_3$ and $b \adj y_3$. Then $G$ is $H_{12}\!+\!e$.

It remains to consider the case in which one of $a$ and $b$ is adjacent to $y_1$ or $y_2$.
Without loss of generality, assume that $a \adj y_1$, and so $b \adj d_1$.
If $b$ is adjacent to $y_2$ or $y_3$, then $G_{y_1,d_4}$ is planar.
Hence $b \adj d_2,d_3$.
There are now four possibilities for the remaining two neighbors of $a$:
$$a \adj y_2,y_3;\qquad
a \adj y_2,d_3;\qquad
a \adj y_3,d_2;\qquad
a \adj d_2,d_3.$$
If $a \adj y_2,y_3$, then $G_{y_1,y_2}$ is planar.
If $a \adj y_2,d_3$, then $G$ is Cousin $109$ in the $E_9\!+\!e$ family.
If $a \adj y_3,d_2$, then $G_{y_1,y_3}$ is planar.
Finally, if $a \adj d_2,d_3$,
then $G_{d_2,d_3}$ is planar.\vspace{2mm}

\noindent{\bf Case (e).} \
Suppose that $x_1 \adj d_1$ and $x_3 \adj d_2,d_3$.
We show that $G$ is $H_{12}\!+\!e$.
First suppose that $a \adj x_3$.
Then $a \adj d_1,y_2,y_3$, and hence $G_{y_2,y_3}$ is planar.
Thus we may assume that $a,b \nadj x_3$.
This implies that $a,b \adj d_4$.

If $a \adj y_1$, then $G_{y_1,d_4}$ is planar.
Hence $a,b \adj d_1$.
Now suppose that $a \adj d_2,d_3$; the case $b \adj d_2,d_3$ is symmetric.
Then $G_{d_1,d_4}$ is planar.
Therefore the only remaining case is $a \adj y_2,d_3$ and $b \adj y_3,d_2$.
In this case, $G$ is $H_{12}+e$.\vspace{2mm}

\noindent{\bf Case (f).} \
Suppose that $x_3 \adj d_1,d_2,d_3$.
First suppose that $a \adj x_3$.
Then $a \adj y_1,y_2,y_3$ and $b \adj d_1,d_2,d_3,d_4$.
Then $G_{y_1,y_2}$ is planar.
Thus we may assume that $a,b \nadj x_3$.
This implies that $a,b \adj d_4$.
If $a,b \adj d_1,d_2,d_3$, then $G_{d_1,d_2}$ is planar.
Otherwise, without loss of generality, assume that $a \adj y_1$. Then $G_{y_1,d_4}$ is planar.

\subsection{$\widehat{G}_{a,b}$ is $K_{3,3}^+$, and $e$ has no degree-$2$ vertex of $G_{a,b}$.} \label{subsec:Hplusndiv} \ 

Suppose that $\widehat{G}_{a,b}$ is $K_{3,3}^+$ and that the added edge $e$ contains no degree-$2$ vertex of $G_{a,b}$.
We show that the resulting graph $G$ is Cousin $43$ in the $E_9\!+\!e$ family.
As in the previous subsection, each vertex $y_i$ is adjacent to a degree-$2$ vertex of $G_{a,b}$. 
Thus, we may say that $y_i \adj d_i$ ($i=1,2,3$). 
Let $d_4$ be the remaining degree-$2$ vertex.
Since the added edge $e$ is not subdivided, four of the six cases in Figure~\ref{fig:24416} contain triangles.
Therefore, up to symmetry, it is enough to consider the two triangle-free cases shown in Figure~\ref{fig:24422}.

\begin{figure}[h!]
\includegraphics[scale=1]{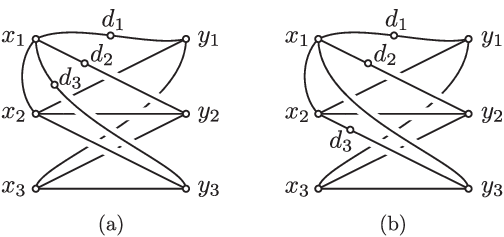}
\caption{Two triangle-free cases.}
\label{fig:24422}
\end{figure}

We claim that each of $a$ and $b$ is adjacent to at least one vertex from the set $\{y_i,d_i\}$ (or the set $\{y_i,d_i,d_4\}$ when $y_i \adj d_4$ or $d_i \adj d_4$) for $i=1,2,3$.
Indeed, suppose for example that $a$ is adjacent to none of the vertices in the relevant set for $i=1$.
Then, as before, $G_{b,x_1}$ is planar, a contradiction.\vspace{2mm}

\noindent{\bf Case (a).} \
Suppose that $x_1 \adj d_1,d_2,d_3$.
We show that $G$ is Cousin 43 in the $E_9\!+\!e$ family.
Without loss of generality, assume that $y_1 \adj d_4$.
If $x_2 \nadj d_4$, then $G_{x_2,d_2}$ is planar, since $y_2$ is adjacent to at most one of $a$ and $b$. 
Hence $x_2 \adj d_4$.
If $d_4$ is adjacent to at most one of $a$ and $b$, 
then $G_{x_2,d_2}$ is planar for the same reason.
Therefore $a,b \adj d_4$, and so $a,b \nadj y_1$.

First suppose that $a,b \adj d_2,d_3$.
If $a \nadj d_1$, then $a \adj x_3$ and $b \adj d_1$.
In this case, $G_{x_2,y_1}$ is planar.
Thus, up to symmetry, we may assume that $a,b \adj d_1$.
But then 
$G_{d_2,d_3}$
is planar.
Thus we may assume, for example, that $a \adj y_2$, and hence $b \adj d_2$.
If $d_3$ is adjacent to at most one of $a$ and $b$,
then $G_{y_3,d_4}$ is planar.
Therefore $a,b \adj d_3$.
Now, if $b \adj d_1$, then $G_{y_2,d_3}$ is planar.
Thus $b \adj x_3$, and consequently $a \adj d_1$. 
The resulting graph $G$ is Cousin 43 in the $E_9\!+\!e$ family.\vspace{2mm}

\noindent{\bf Case (b)-1.} \
Suppose that $x_1 \adj d_1, d_2$ and $x_2 \adj d_3$.
Additionally, suppose that $d_4$ is adjacent to some $d_i$ for $i=1,2,3$.
We show that $G$ is Cousin 43 in the $E_9\!+\!e$ family.
Up to symmetry, there are two cases: $d_4 \adj x_1,d_1$ and $d_4 \adj x_2,d_3$.

First suppose that $d_4 \adj x_1,d_1$.
Assume that $a \adj d_1$ and $b \adj d_4$.
If $a \nadj d_3$, then $G_{y_3,b}$ is planar.
So $a \adj d_3$, and similarly $b \adj d_3$.
Hence, $a,b \nadj y_3$, and consequently $a \adj x_3,d_2$ and $b \adj y_1$.
But then $G_{x_1,y_1}$ is planar.

Next suppose that $d_4 \adj x_2,d_3$.
Assume that $a \adj d_3$ and $b \adj d_4$.
By 
our observation,
there are three cases.
If $b \adj y_1,y_2$, then $G_{y_3,a}$ is planar.
If $b \adj d_1,d_2$, then $G_{d_1,d_2}$ is planar.
Thus, without loss of generality, we may assume that $b \adj y_1,d_2$.
Then $b \adj y_3$ and $a \adj x_3,d_1$.
If $a \adj y_2$, then $G_{x_1,y_1}$ is planar.
If $a \adj d_2$, then the resulting graph $G$ is Cousin 43 in the $E_9\!+\!e$ family.\vspace{2mm}

\noindent{\bf Case (b)-2.} \
Suppose that $x_1 \adj d_1, d_2$ and $x_2 \adj d_3$.
Additionally, suppose that $d_4$ is not adjacent to any $d_i$ for $i=1,2,3$.
We show that $G$ is Cousin 43 in the $E_9\!+\!e$ family.
Note that the case $d_4 \adj x_1,y_3$ is already covered in Case (a).
Up to symmetry, there are three cases:
$$ d_4 \adj x_2,y_1;\qquad d_4 \adj x_3,y_1;\qquad
d_4 \adj x_3,y_3. $$

First suppose that $d_4 \adj x_2,y_1$.
We show that $a,b \nadj y_i$ for $i=1,2,3$.
If $a \adj y_1$, then $a \nadj d_1,d_4,x_3$.
Thus the four adjacencies of $a$ cannot be completed.
Hence $a \nadj y_1$, and similarly $b \nadj y_1$.
Now suppose that $a \adj y_2$. 
Then $b \adj d_2$ and $a \adj d_1, d_4$.
If either $a$ or $b$ is not adjacent to $d_3$, then $G_{y_2,y_3}$ is planar.
Therefore $a,b \adj d_3$.
Furthermore, if $b$ is not adjacent to one of $d_1$ and $d_4$, then $G_{x_1,y_2}$ is planar.
On the other hand, if $b \adj d_1,d_4$, then $G_{d_3,d_4}$ is planar.
Thus $a,b \nadj y_2$, and hence $a,b \adj d_2$.
Similarly, suppose that $a \adj y_3$.
Then $b \adj d_3$ and $a \adj d_1,d_4$.
If $b \adj d_1$, then $G_{d_1,d_2}$ is planar.
Otherwise, $b \adj x_3,d_4$, and then 
$G_{d_2,d_4}$ 
is planar.
Thus $a,b \nadj y_3$, and hence $a,b \adj d_3$.
Consequently, each of $a$ and $b$ is adjacent to exactly two vertices among $x_3$, $d_1$, and $d_4$.
In every case, $G_{d_2,d_3}$ is planar.

Now consider the second case that $d_4 \adj x_3, y_1$.
Similarly, we show that $a,b \nadj y_i$ for $i=1,2,3$.
If $a \adj y_3$, then $G_{b,y_3}$ is planar.
Therefore $a,b \nadj y_3$, and hence $a,b \adj d_3$.
If $a \adj y_1$, then $a \nadj d_1,d_4$.
This implies that $a \adj x_3$ and $b \adj d_1,d_4$.
By the 
observation,
each of $a$ and $b$ is adjacent to either $y_2$ or $d_2$. 
If one of $a$ and $b$ is adjacent to $y_2$, then $G_{x_1,y_1}$ is planar.
Therefore $a,b \adj d_2$.
Then $G_{d_2,d_3}$ is planar.
Therefore $a,b \nadj y_1$.
If $a \adj y_2$, then $a \nadj d_2,x_3$.
This implies that $a \adj d_1,d_3,d_4$ and $b \adj d_2,d_3$.
Then $G_{y_2,d_4}$ is planar.
Thus $a,b \nadj y_2$.
Consequently, $a,b \adj d_1,d_2,d_3$, and each of $a$ and $b$ is adjacent to either $x_3$ or $d_4$.
In every case, $G_{d_2,d_3}$ is planar.

It remains to consider the case in which $d_4 \adj x_3, y_3$.
If $a \adj y_3$, then $G_{b,y_3}$ is planar.
Hence $a \nadj y_3$, and similarly $b \nadj y_3$.
Therefore $a,b \adj d_3$.
Suppose first that $a,b \nadj x_3$, which implies $a,b \adj d_4$.
If either $a$ or $b$ is adjacent to both $d_1$ and $d_2$, then $G_{d_3,d_4}$ is planar.
Thus we assume that $a \adj y_1,d_2$ and $b \adj y_2,d_1$.
But then $G_{x_1,y_1}$ is planar.
Therefore, without loss of generality, assume that $a \adj x_3$ and $b \adj d_4$, and hence $a \adj d_1,d_2$.
If $b \adj y_1,y_2$ or $b \adj d_1,d_2$, then $G_{d_3,d_4}$ is planar.
By symmetry of the resulting graph, the remaining neighbors of $b$ are $y_1$ and $d_2$.
The resulting graph $G$ is Cousin 43 in the $E_9\!+\!e$ family.

\section*{Acknowledgement}  
The first author was supported by the National Research Foundation of Korea(NRF) grant funded by the Korea government(MSIT) (RS-2026-25604998).
The third author was supported by the National Research Foundation of Korea(NRF) grant funded by the Korea government(MSIT) (No. RS-2024-00341075). 
This research was carried out, in part, during a visit of the second author to Kyungpook National University and he
thanks the department for their hospitality. We are grateful to Professors A.~Pavelescu and Teragaito for
valuable conversations.


\begin{thebibliography}{99}
\bibitem{BM} J. Barsotti and T.W. Mattman,
    {\em Graphs on 21 edges that are not $2$-apex},
    Involve  \textbf{9}  (2016) 591--621.
\bibitem{BBFFHL} P. Blain, G. Bowlin, T. Fleming, J. Foisy, J. Hendricks, and J. LaCombe,
    {\em Some results on intrinsically knotted graphs},
    J. Knot Theory Ramif. \textbf{16} (2007) 749--760.
\bibitem{CG} J. Conway and C. Gordon,
    {\em Knots and links in spatial graphs},
    J. Graph Theory \textbf{7} (1983) 445--453.
\bibitem{FMMNN} E. Flapan, T.W.~Mattman, B.~Mellor, R.~Naimi, and R.~Nikkuni,
 {\em Recent developments in spatial graph theory},
 Contemp. Math., \textbf{689}
American Mathematical Society, Providence, RI, (2017) 81–102.
\bibitem{F} J. Foisy,
    {\em Intrinsically knotted graphs},
    J. Graph Theory \textbf{39} (2002) 178--187.
\bibitem{GMN} N. Goldberg, T.W. Mattman, and R. Naimi,
   {\em Many, many more intrinsically knotted graphs},
    Algebr. Geom. Topol. \textbf{14} (2014) 1801--1823.
\bibitem{HNTY} R. Hanaki, R. Nikkuni, K. Taniyama, and A. Yamazaki,
    {\em On intrinsically knotted or completely $3$-linked graphs},
    Pacific J. Math. \textbf{252} (2011) 407--425.
\bibitem{JKM} B. Johnson, M. Kidwell, and T. Michael,
    {\em Intrinsically knotted graphs have at least $21$ edges},
    J. Knot Theory Ramif. \textbf{19} (2010) 1423--1429.
\bibitem{KLLMO} H. Kim, H. J. Lee, M. Lee, T.W. Mattman, and S. Oh,
    {\em A new intrinsically knotted graph with 22 edges},
    Topol. Appl. \textbf{228} (2017) 303--317.
\bibitem{KM} H. Kim and T.W. Mattman,  
{\em Dips at small sizes for topological graph obstruction sets},
Discrete Appl. Math. \textbf{360} (2025), 139–166.
\bibitem{KMO1} H. Kim, T.W. Mattman, and S. Oh,
    {\em Bipartite intrinsically knotted graphs with 22 edges},
    J. Graph Theory \textbf{85} (2017) 568--584.
\bibitem{KMO2} H. Kim, T.W. Mattman, and S. Oh,
    {\em More intrinsically knotted graphs with 22 edges and the restoring method},
    J. Knot Theory Ramif. \textbf{27} (2018) 1850059.
\bibitem{KS} T. Kohara and S. Suzuki,
    {\em Some remarks on knots and links in spatial graphs}
    in Knots 90 (de Gruyter, Osaka) (1992) 435--445.
\bibitem{LKLO} M. Lee, H. Kim, H. J. Lee, and S. Oh,
    {\em Exactly fourteen intrinsically knotted graphs have 21 edges},
    Algebr. Geom. Topol. \textbf{15} (2015) 3305--3322.
\bibitem{M} T.W. Mattman, 
   {\em Graphs of 20 edges are 2-apex, hence unknotted},
   Algebr. Geom. Topol. \textbf{11} (2011) 691--718.
\bibitem{OT} M. Ozawa and Y. Tsutsumi,
    {\em Primitive spatial graphs and graph minors},
    Rev. Mat. Complut. \textbf{20} (2007) 391--406.
\bibitem{RS} N. Robertson and P. Seymour,
    {\em Graph minors XX, Wagner's conjecture},
    J. Combin. Theory Ser. B \textbf{92} (2004) 325--357.
\bibitem{S} H. Sachs,
    {\em On spatial representations of finite graphs}
    in Colloq. Math. Soc. J\'{a}nos Bolyai \textbf{37} (North Holland, New York) (1984) 649--662.
\end{thebibliography}
\end{document}